\def\eL{K}
\title{On a connection between total positivity and Bernoulli stopping problems}
\author{Zakaria Derbazi\footnote{Correspondence address: zakaria@derbazi.net} \\{\it\small Queen Mary University of London}}
\def\myabstract{
	Consider a discrete-time optimal selection problem where one observes a sequence of   independent Bernoulli trials and receives a nonnegative reward upon stopping on a success. The objective is to find a single-choice strategy that maximises the expected payoff. These Bernoulli stopping problems are characterised by two key properties: (i) a recurrence relation connecting the reward sequence to the continuation payoff sequence, and (ii) the total positivity of the Markov chain  embedded in success epochs of the trials. The recurrence is fundamental in proving the optimality of the myopic strategy under unimodal continuation payoff sequence. On the other hand, the total positivity ensures that the expectation of a quasi-unimodal function of the chain remains quasi-unimodal with respect to the initial state. In particular, if  the number of successes is finite almost surely, the quasi-unimodality of the  reward sequence is sufficient for the myopic rule to be optimal. Illustrative examples are given in various last-success settings.
}
\tikzset{%
	>={Latex[width=2mm,length=2mm]},
	base/.style = {rectangle, rounded corners, draw=black,
		minimum width=2cm, minimum height=1cm,
		text centered, font=\sffamily},
	activityStarts/.style = {base},
	startstop/.style = {base, fill=green!5},
	startstop2/.style = {base, fill=purple!5},	
	activityRuns/.style = {base, fill=green!30},
	activityRuns2/.style = {base, fill=purple!40},	
	finalOutput/.style = {base, fill=black!30},	
	process/.style = {base, minimum width=2.5cm, fill=orange!15,
		font=\ttfamily},
	process2/.style = {base, minimum width=2.5cm, fill=orange!50,
		font=\ttfamily},		
}
\begin{document}
	\maketitle
	\begin{abstract}
		\myabstract
	\end{abstract}

	\section{Introduction}\label{sect1}
	Let  $f \coloneqq (f_k)_{k \ge 0}$ be a bounded sequence of nonnegative rewards associated with the following stopping game:    a player observes a sequence of independent Bernoulli trials arriving at discrete epochs. Upon encountering a success, at trial $k \in \mn$,  the player faces two choices: 
	1) stop and collect the reward $f_k$, in which case, the game terminates, or 2) proceed to the next trial. We set $f_0 = 0$ and $f_\infty= 0$ to represent the reward for not taking any observation and the reward for never stopping,  respectively. The latter ensures that the game will stop with probability one.

	More formally, consider a sequence of  Bernoulli random variables  $X_1, X_2, \ldots$ defined on a probability space $(\Omega, {\cal B}, \prob)$. Each trial has success probability  $p_k \coloneqq \prob(X_k=1)$ and the collection of success probabilities $p \coloneqq(p_k)_{ k \ge 1}$ is referred to as the \textit{success profile}.	Denote the random reward sequence of the stopping problem by  $Y\coloneqq(Y_k)_{k \ge 0}$, where  $Y_0 = 0 \;\text{almost surely} \;(\rm a.s.)$ and 
	\begin{equation}\label{def.payoof}
			Y_k \coloneqq  \1\{X_k = 1\} f_k
	\end{equation}
	is the reward for stopping at trial $k \in \mn$. Here,	$\1\{\cdot\}$ denotes the indicator function.	Each $Y_k$ is $\mathcal{F}_k$-measurable, where  $\mathcal{F}_k$ denotes the $\sigma$-algebra  generated by the first $k$ trials satisfying  $\mathcal{F}_{k-1} \subset \mathcal{F}_{k}$ for every $k \in \mn$, with ${\cal F}_0=\{ \varnothing, \Omega\}$ and  $\mathcal{F}_\infty$ being the $\sigma$-algebra generated by $\cup_k \mathcal{F}_k\subset \cal B$.  Let $\mathcal{T}$ denote the class of all nonanticipating stopping rules with respect to  $(\mathcal{F}_k)_{k \ge 0}$. To solve the stopping problem, we seek a rule $\tau$ in $\mathcal{T}$ that maximises the value of the game; that is,  $\mE{Y_\tau} =\,\sup_{k \in \calT} \mE{Y_k}$. Although each $Y_k$ has an existing and finite expectation, the supremum may not be well-defined. To overcome this difficulty, we employ   the \textit{Snell envelope} of $Y$, denoted  $Y^\circ\coloneqq(Y^\circ_k)_{ k \ge 0}$, where $	Y^\circ_k \coloneqq  \esssup_{j > k} Y_j$ for $k \in \mn$. We proceed by relying on the familiar principle of optimality to check if the problem is  \textit{monotone} \cite{CRS}. This criterion holds true if the sequence of events $A \coloneqq (A_k)_{k \ge 0}$, where $A_k \coloneqq \{ Y_j \ge \mE{Y_{j+1} | \mf_k}\}$,  satisfies the condition $$A_{0} \subset A_{1}\subset A_{2} \subset \cdots ~\;\text{(a.s.)}$$ In this case, the sequence $A$ is said to be  \textit{absorbing} and	the \textit{myopic} strategy---also referred to as the \textit{one-step lookahead} strategy---is optimal \cite{FergusonBook}. Using the convention $\min \{\varnothing\} = \infty$, the myopic rule  is  given by
	\begin{equation*}
		\tau \coloneqq \min\{ j \ge 1:  Y_j \ge \mee Y^\circ_j \}.
	\end{equation*}
The scenario $\tau=\infty$ corresponds to the strategy of never stopping, which yields the reward $f_\infty$. 
	Define $g \coloneqq \mE{Y^\circ} =(g_k)_{k \ge 0}$ and assume that this sequence is either known directly or obtained via the relation $g= \eL f$. Here, 	$\eL$ represents the one-step transition operator that, at stage $k$, computes the expected payoff $g_{k+1}$ obtained by skipping the first $k$ trials and then stopping at the first success among $X_{k+1},X_{k+2}, \ldots$ (if any).   Under this framework,  the event $A_k$ can be written as $\{ X_k=1 \text{ and } f_k \ge g_{k} \}$ and the myopic rule becomes
	\begin{equation}\label{ch1.def.ola.rule}
		\tau = \min\{ j \ge 1: X_j=1 \text { and }  f_j \ge g_{j} \}.
	\end{equation}
	This particular form of the myopic rule characterises	 the class of \textit{Bernoulli stopping problems}, which generalise the relatively-best choice problems introduced by \cite{FergusonHardwick}.

	Bernoulli stopping problems are rooted in the foundational work on discrete-time stopping problems developed by \cite{Dynkin}  and \cite{FergusonBook}, respectively. These problems generalise the classical secretary problem and several of its extensions, including the duration problem \cite{FergusonHardwick} and the last-success problem  \cite{GD2}. A special case relevant  to our work is the model studied by  \cite{Ribas2019}, who examined the last-success problem under arbitrary rewards and a finite number of trials.  Bernoulli stopping problems are specified by a success profile $p$ and a sequence of fixed  \textit{(stopping) rewards} $f$, or equivalently by  a  sequence of {rewards} $f$  together with either a sequence of  \textit{continuation} payoffs $g$ or  a transition operator $\eL$. 
	
	Our goal in this work is to revisit the fundamental question of optimality of the myopic strategy, particularly  in the infinite-horizon setting with quasi-unimodal payoffs. A quasi-unimodal sequence is one that may or may not attain its supremum and becomes unimodal once it is  truncated. This investigation is  motivated in part by analysing the intrinsic Markovian structure of Bernoulli stopping problems and extending existing results to the case of infinite number of trials. In addition, we aim to study the properties of the transition operator that links stopping rewards to continuation payoffs. This connection is explored through the theory of total positivity, which has found successful applications in many areas of mathematics \cite{KarlinTPBook}.
	
	Recall that a real matrix, finite or infinite, is   \textit{sign regular} of order $r$,  or $\SR{r}$ for short, if the signs of all minors of order $k=1,\ldots, r$ depend only on $k$. In the special case where these minors are nonnegative, the matrix is called   \textit{totally positive},  abbreviated as $\TP{r}$. Similarly, a sequence  $a \coloneqq (a_k)_{k \in I}$, where  $I  \subseteq \mn$,  is  $\SR{r}$ (resp. $\TP{r}$)  if its \textit{kernel representation}---the matrix $A\coloneqq(a_{i-j})_{i,j \in I}$, where $a_k = 0$ when $k < 0$---is  $\SR{r}$ (resp. $\TP{r}$)  \cite{KarlinTPBook}. 
	When the property $\SR{r}$ (resp. $\TP{r}$) holds for all orders, the subscript $r$ is omitted, and the matrix or sequence is simply called sign-regular (resp. totally positive).  An important feature of the class of \( \TP{} \) matrices is their closure under matrix multiplication.

	Transformations involving sign-regular matrices are characterised by the \textit{variation-diminishing} (VD) property: if a transformation $K$ defined on a sequence $u$, possesses this property, then the number of sign changes in  $Ku$  does not exceed the 
	number of sign changes in $u$. 
	
	To verify a unimodality-preserving property in  Markov processes with a stochastic monotone transition operator,   \cite{KeilsonKester} combined  the theory of total positivity with the properties of stochastically monotone matrices---first introduced by \cite{Kalmykov} and later generalised by \cite{Daley}. However, in the countably infinite case, such an operator preserves only quasi-unimodality, since the transformed sequence need not  attain its supremum.  Even in the countably finite case, the method hinges on  stochastic monotonicity of the transition matrix. Consequently, an alternative approach is needed to determine when a stochastic matrix preserves (quasi-)unimodality. It turns out that $\TP{3}$  stochastic matrices preserve quasi-unimodality unconditionally.

	The structure of the paper is as follows: Section 2 introduces key properties of unimodal sequences and shows that transformations induced by  
	$\TP{3}$ stochastic matrices preserve unimodality for finite sequences and quasi-unimodality for infinite sequences. This result finds natural application in the classical embedding of success epochs of independent Bernoulli trials in a Markov chain. Specifically, it is shown that the expectation of unimodal functions of the chain is quasi-unimodal in the initial state. 	Since the continuation payoff sequence is a mapping of the stopping reward sequence via the transition matrix of this chain, 
	Section 3 builds on the previous findings to derive a recurrence relation linking continuation payoffs and stopping rewards. It then establishes a sufficient condition for the optimality of the myopic strategy, based on the unimodality of the continuation payoffs or the quasi-unimodality of the stopping rewards. Finally, Section 4 applies these results to generalised last-success problems, providing new and simpler proofs of the myopic rule's optimality.

	\section{Transformations Preserving Quasi-Unimodality }\label{sec2}
	
	First, we introduce several key concepts used throughout this paper.
		\begin{define}[Minor sign]
		Given a sign-regular matrix, its  minor sign of order $k \ge 1$  is the number $\varepsilon_k \in \{-1, 1\}$ satisfying $\varepsilon_k d_k \ge 0$, where $d_k$ denotes any determinant of order $k$ of the matrix.
	\end{define}
	
	\begin{define}[Number of sign changes]\label{def.sign.chg}
		The number of sign changes in a finite or infinite sequence $u \coloneqq (u_1, u_2, \ldots,)$ is defined by
		\begin{equation*}
			S(u)  \coloneqq  S(u^\prime_{1},u^\prime_{2}, \ldots) = \sum_{i \ge 1} \abs{\sgn(u^\prime_{i+1})-\sgn(u^\prime_i)},\qquad S(0, 0, \ldots, 0) =0,
		\end{equation*}	
		where $u^\prime_1, u^\prime_{2}, \ldots$  is the sequence $u$  with all zero terms removed.
	\end{define}

	\begin{define}[Unimodal sequence]\label{def.unimodel}
		A   real  sequence $u \coloneqq (u_1, u_2, \ldots)$  supported on the lattice of positive integers is said to be 
		\textit{unimodal} or modal of order one, if there exists an \textit{interval of modes}
		$[k^-, k^+] \subset \mn$ such that
		\begin{equation}\label{def.condition.multimodel.sequence1}
			\begin{dcases}
				u_{j} \le  u_{j+1} &~ \text{all } ~ j  <  k^-,\\
				u_{j} =  u_{j+1} &~ \text{all } ~  k^- \le j  < k^+,\\
				u_{j} \ge  u_{j+1} &~ \text{all } ~  j \ge k^+
			\end{dcases}.
		\end{equation}
	\end{define}	
	A more practical characterisation of unimodality is given next.
	\begin{lemma}[\cite{ZD4}]\label{lem.sign.chg}
		A  sequence $u : I \subseteq \mn \to \mr$ of two or more elements is unimodal iff
		\begin{enumerate}[label=\rm(\roman*)]
			\item For every $\lambda \in \mr$, $u-\lambda$ has at most two sign changes and whenever two sign changes occur,  their pattern is  $-, +, -$.
				\item $u$ attains its supremum.
		\end{enumerate}
	\end{lemma}	
Condition \rm(ii), which always holds for finite sequence, implies $u$ possesses a nonincreasing tail. That is, there exists an integer $N \ge 1$ such that $u_{k+1} \le u_{k}$ for all $k \ge N$. When $I \coloneq \{ 1, \ldots, n \}$, we set $u_{n+1}\coloneqq u_n$.  

Many infinite sequences---such as the infinite nondecreasing sequences $u_k=k$  or $u_k = 1-1/k$ for $k > 0$---do not attain a maximum. We refer to such sequences as \textit{quasi-unimodal}.
\begin{define}[Quasi-unimodal sequence]
A real sequence  on $\mn$ is quasi-unimodal if it satisfies condition \rm(i)  of Lemma \ref{lem.sign.chg}.
\end{define}
\begin{lemma}
Every  unimodal real sequence is quasi-unimodal, and every truncated  quasi-unimodal real sequence is unimodal.
\end{lemma}
\begin{proof}
The first assertion is immediate from the definitions. The second follows from the fact that any finite sequence attains its supremum. 
\end{proof}

Having disposed of the preliminary definitions, we now turn to transformations with sign-regular kernels. 	Consider the  linear transformation based on a kernel  $K: \mn \times \mn $ defined for a bounded sequence  $u:\mn\to \mr$
	\begin{alignat}{2}
		v_n &\coloneqq  \sum_{j \in \mn} K(n,j)u_j,\qquad &n \in \mn \label{def.kernel.sequence-sequence},
	\end{alignat}
	This transformation is well-defined when the series $\sum_{j \in \mn} K(n, j)$ converges  for each $n \in \mn$. Our goal is to identify conditions under which this transformation maps a quasi-unimodal sequence to another quasi-unimodal sequence. To achieve this, we rely on the theory  of total positivity to obtain simple, sufficient conditions ensuring that $K$ preserves quasi-unimodality.

	We begin by recalling a  fundamental theorem (see p. 233 in \cite{KarlinTPBook}), which states that under suitable conditions, a transformation governed by a $\SR{r}$ kernel preserves the sign pattern of functions with at most $r-1$ sign changes.
	
	\begin{thm}[Theorem 3.1 \& Proposition 3.1,  Chapter 3 in \cite{KarlinTPBook}]\label{def.vd}
		Fix $r \ge 1$ and suppose  $K$  is the transformation  defined in (\ref{def.kernel.sequence-sequence}), with  $\sum_{j \in \mn} K(n, j) < \infty$ for each $n \in \mn$. If $K$ is $\SR{r}$, then for any  real-valued sequence $u$ on $\mn$ and its mapping 	$v\coloneqq K u$,  the following  hold:
		\begin{enumerate}[label=\rm(\roman*)]
			\item  $K$ is VD whenever $S({f}) \le r-1$.  In particular, if $r=2$ and $u$ is monotone, then $v$ is monotone.
			\item If $S({v})=S({u}) = k \le r-1$ and $K$ is $\SR{r}$, then ${u} $ and ${v}$ exhibit the same monotonicity patterns whenever  $\varepsilon_k\varepsilon_{k+1}=1$.
		\end{enumerate}
	\end{thm}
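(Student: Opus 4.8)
The plan is to reduce both assertions to a finite sign-regularity statement about a single wide matrix, and then to close the argument by a Cramer's-rule analysis of its null space. The point is that each assertion constrains only finitely many values of $v$ at a time, so the hypothesis $\sum_{j\in J}K(n,j)<\infty$ will let me rearrange the defining series at will. Setting $m\coloneqq S(u)$ and assuming $m\le r-1$, I would partition $J$ into the $m+1$ maximal consecutive blocks $J_1<\cdots<J_{m+1}$ on which $u$ keeps a constant sign $\eta_\ell\in\{-1,+1\}$, assigning the zeros of $u$ to blocks so that adjacent blocks carry opposite signs. Writing $W_{n\ell}\coloneqq\sum_{j\in J_\ell}K(n,j)\abs{u_j}\ge 0$, the definition of $\eL$ becomes $v_n=\sum_{\ell=1}^{m+1}\eta_\ell W_{n\ell}$. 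The auxiliary matrix $U$ with entries $\abs{u_j}\1(j\in J_\ell)$ has a single nonnegative entry per row arranged along a nondecreasing staircase, so each of its minors expands to one nonnegative product and $U$ is totally positive. Since $W=KU$, the basic composition formula (Cauchy--Binet for kernels) writes every minor of $W$ of order $k\le r$ as a sum of products of a $k\times k$ minor of $K$ with a $k\times k$ minor of $U$; each such product has sign $\varepsilon_k$ or vanishes, so $W$ is $\SR{r}$ with exactly the minor signs $\varepsilon_1,\dots,\varepsilon_r$ of $K$. This is the step that transfers the hypothesis on $K$ onto a finite object.

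For the variation-diminishing claim (i) I would argue by contradiction. If $S(v)\ge m+1$, I can choose indices $n_0<\cdots<n_{m+1}$ at which $v$ strictly alternates, say $\delta(-1)^i v_{n_i}>0$ for a fixed $\delta\in\{-1,+1\}$. Restricting $W$ to these $m+2$ rows gives an $(m+2)\times(m+1)$ matrix $\widetilde W$, and from $v=\widetilde W\eta$ on these rows any left null vector $c$ satisfies $\sum_i c_i v_{n_i}=0$. By Cramer's rule such a vector is given by $c_i=(-1)^iM_i$, where $M_i$ is the maximal minor of $\widetilde W$ with row $i$ deleted; as $W$ is $\SR{m+1}$, all these maximal minors share the sign $\varepsilon_{m+1}$, whence each term $(-1)^iM_iv_{n_i}=\varepsilon_{m+1}\delta\abs{M_i}\abs{v_{n_i}}$ has one and the same sign. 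A vanishing sum of equal-signed terms forces every term to vanish, which is impossible as soon as $\widetilde W$ has rank $m+1$; this proves $S(v)\le S(u)$. The monotone corollary for $r=2$ then follows by applying the bound to the level sets $u-\lambda$: once $\eL$ is normalised to fix constants so that $\eL(u-\lambda)=v-\lambda$ (automatic for the stochastic kernels treated next), monotonicity of $u$ gives $S(u-\lambda)\le 1$ for every $\lambda$, hence $S(v-\lambda)\le 1$, i.e. $v$ is monotone.

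For the arrangement statement (ii) I would re-run the same identity in the critical regime $S(v)=S(u)=k$, where $u$ now has $k+1$ sign blocks and I would test against $k+1$ indices $n_1<\cdots<n_{k+1}$ meeting the successive sign blocks of $v$. The relevant window is now the square $(k+1)\times(k+1)$ submatrix $\widehat W$, whose determinant has sign $\varepsilon_{k+1}$, and reading the signs of $v=\widehat W\eta$ through its minors pins the sign arrangement of $v$ against that of $u$. Whether the pattern is reproduced or reflected is decided by comparing the sign $\varepsilon_k$ that fixes the orientation of the $k$-block configuration with the sign $\varepsilon_{k+1}$ that controls the top-order minor, i.e. by the product $\varepsilon_k\varepsilon_{k+1}$. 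It is worth noting that this product equals $+1$ for both $\TP{r}$ and $\TN{r}$ kernels, which is exactly the feature that will let both classes preserve unimodality later on.

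The main obstacle is not the algebra but the degenerate cases. Everything above presumes that the extracted windows $\widetilde W$ and $\widehat W$ attain full rank, whereas a sign-regular kernel can produce rank-deficient windows, and genuine zeros of $u$ or $v$ can blur the strict alternation the argument relies on. I expect to handle these by first reducing to a minimal alternating set of indices and then, if needed, perturbing $K$ by a strictly totally positive smoothing kernel and passing to the limit at the end, using the lower semicontinuity of $S$ under such approximations. The remaining technical point is to justify the basic composition formula and the interchange of summations in the infinite-index setting, which is precisely where the standing hypothesis $\sum_{j\in J}K(n,j)<\infty$ earns its keep.
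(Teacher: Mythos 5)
The paper does not actually prove this theorem: it is quoted, with attribution, from Karlin's monograph \cite{KarlinTPBook}, so there is no in-paper argument to compare against; the relevant benchmark is Karlin's own proof, and your outline is essentially a faithful reconstruction of it. The staircase matrix $U$ encoding the sign blocks of $u$, the transfer of sign regularity from $K$ to $W=KU$ via the basic composition (Cauchy--Binet) formula, and the alternating left null vector $c_i=(-1)^iM_i$ of the $(m+2)\times(m+1)$ window are exactly the classical ingredients, and your Cramer-type reading of part (ii), with orientation governed by $\varepsilon_k\varepsilon_{k+1}$, matches the standard derivation. Two of your side remarks are correct and worth keeping: the $r=2$ monotonicity clause genuinely requires $\eL$ to act as the identity on constants (constant row sums) --- otherwise a diagonal counterexample such as $\diag(1,5,1)$ maps the constant sequence to a non-monotone image, so the clause as stated in the paper implicitly assumes the normalisation that holds for its stochastic matrices --- and the observation that $\varepsilon_k\varepsilon_{k+1}=1$ for both $\TP{3}$ and $\TN{3}$ kernels is precisely what the paper exploits in Theorem \ref{thm.unimodality.preserving}.

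The genuine gap is the degenerate case, and your proposed patch does not work as stated. Composing (or adding) a strictly totally positive smoothing kernel to $K$ does not yield strict sign regularity: by the very composition formula you invoke, if all minors of $K$ of a given order on the relevant columns vanish, then the corresponding minors of the smoothed kernel vanish as well, so rank deficiency of the windows $\widetilde W$ and $\widehat W$ survives the perturbation rather than being removed by it. What the limiting argument actually needs is a Whitney-type density theorem (every totally positive matrix is a limit of strictly totally positive ones, together with its sign-regular analogue and a truncation step for the infinite index set $J$), which is a nontrivial result in its own right; alternatively one treats vanishing minors directly, by re-selecting the test rows or by induction on rank, as the classical proofs do. For part (ii) the limit step also needs an extra line: lower semicontinuity of $S$ at the strict alternation points of $v$, combined with the VD bound already established for the approximants, gives $S(v_\epsilon)=k$ eventually, so the strict form of (ii) applies for each approximant and the sign of the leading block passes to the limit. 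With the density theorem either proved or cited, your argument closes; as written, that step is a placeholder rather than a proof.
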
%
	A transformation using a matrix $K \in \TP{2}$  preserves log-concavity (resp. log-convexity) of sequences. In contrast, preserving concavity (resp. convexity) requires $K \in \TP{3}$  \cite{KarlinProschan}. Although positive log-concave sequences are unimodal  \cite{Stanley}, and their unimodality remains intact under a transformation by a $\TP{2}$ matrix, extending this preservation property to arbitrary unimodal sequences is not straightforward. For example, consider the unimodal (and log-convex) sequence $u=(3, 3.5, 3.15)$ and the  $\TP{2}$ matrix $K$, which is not $\TP{3}$, where
	$$
	K=\left(\begin{array}{ccc}
		10 & 5 & 0\\
		8 & 4 & 3\\
		4 & 3 & 8
	\end{array}\right).
	$$
	A direct computation yields $ Ku = (47.5, 47.45, 47.7)$, which has two distinct modes. A further complication for infinite sequences is the possible failure to attain the supremum.  Our first result provides conditions under which a transformation using on a totally positive kernel preserves unimodality.

\begin{thm}\label{thm.unimodality}
Consider  the transformation  (\ref{def.kernel.sequence-sequence}) with a stochastic matrix  $K$. Let $u : \mn \to \mr$ be a quasi-unimodal sequence. If $K  \in \TP{3}$, then  $v \coloneqq K u$ is quasi-unimodal. If in addition $v$ attains its supremum, then $v$ is unimodal. In particular, if $v$ is a finite sequence then it is unimodal.
\end{thm}

	\begin{proof}
	 By assumption $ \sum_{j \ge 1} K(i,j) =1$ for each $i \in \mn$.   Let  $v \coloneqq K u$ and  define the sets $E_k \coloneqq \{ \lambda \in \mr: S(v - \lambda)=k \}$, where $k \in \{ 0, 1, \ldots\}$.  By Lemma \ref{lem.sign.chg} and the VD property, $E_k = \varnothing$ for all $k > 2$,  since for any $\lambda \in \mr$,  $v-\lambda = \eL  (u-{\lambda})$ and 	 $S(u-\lambda) \le 2$. Now, we consider two cases. If $E_2\neq \varnothing$, then by part \rm(ii) of Theorem \ref{def.vd}, $v$ and $u$ share the same $-+-$ sign pattern,  because $K \in \TP{3}$ implies  $\varepsilon_2\varepsilon_3=1$. Hence, $v$ is quasi-unimodal. If $E_2= \varnothing$,then $v$ is monotone and therefore quasi-unimodal. 	 For the second part, if $v$ attains its supremum then according to Definition \ref{def.unimodel} it is unimodal. The final assertion is obvious.
	\end{proof}

	\begin{rem}
		Let  $T$ be the lower triangular matrix of ones and $I^* \coloneqq \diag(0,1,1\ldots,1,)$.  \cite{KeilsonKester} showed that a transformation defined by a stochastic  matrix $Q$ which is \textit{monotone}---that is, $T^{-1}QT \in \TP{2}$---and satisfies the condition $I^*T^{-1}QT \in \TP{2}$, preserves unimodality in the finite case. For infinite matrices, however, such  transformations only preserve quasi-unimodality.
	\end{rem}

	\subsection{The Markov chain embedded in success epochs}
	Let $X \coloneqq (X_1, X_2,\ldots, )$ be a sequence of independent Bernoulli trials with success profile  $p \coloneqq (p_1, p_2,\ldots)$, where $p_j=1-q_j=\prob(X_j = 1)$ and $p_j > 0$  for each trial $j$. 
	Consider the Markov chain $M$  formed by the random indices of successes (Markov times) in the sequence $X$. In this construction, introduced by Dynkin \cite{Dynkin}, failures are ignored, and the sequence of success epochs is represented on the state space ${S}\coloneqq \mn \cup \{0\} \cup \{\infty\}$. Here, $0$ denotes the default initial state, while $\infty$ is the terminal state, which is eventually reached if the number of successes is finite---this occurs with certainty when $\sum_k p_k<\infty$ (By Borel-Cantelli lemma).  For example, given the sequence of trials  $0,1,1,0,0,1,0^\infty$, the corresponding path of $M$ is   $0,2,3,6,\infty$. The dynamics of the chain are  illustrated in Figure \ref{fig.M}.\\
	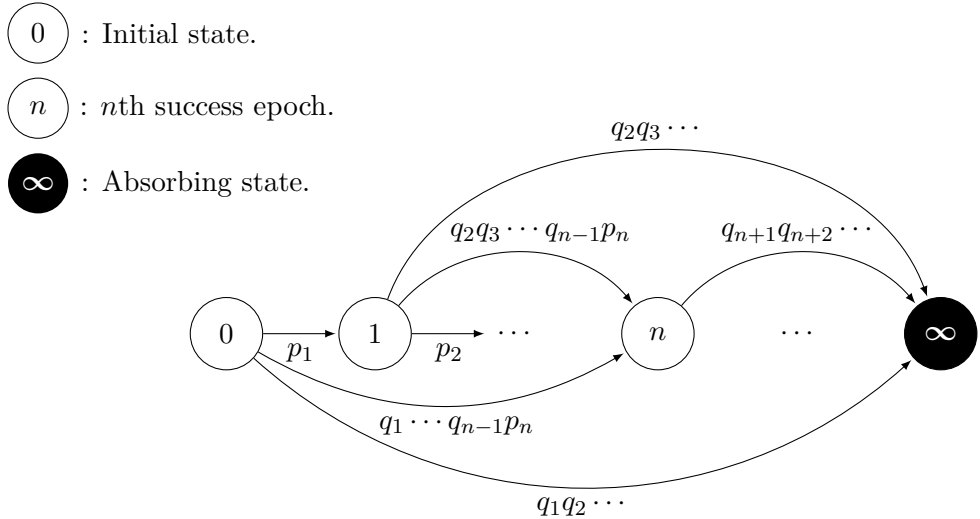
\begin{figure}[htb]
		\centering
		\begin{tikzpicture}[start chain=going left,node distance=1cm]
			\draw (-12,4) circle [radius=0.4] node (initState) {$0$};				
			\draw (-12,3) circle [radius=0.4] node (nState) {$n$};							
			\fill (-12,2) circle [radius=0.4, fill=yellow!80!black] node (absState) {\color{white}$\boldsymbol\infty$};			
			\node[right of=initState, xshift=0.75cm](initialText) {: Initial state.};
			\node[below of=initialText, xshift=0.5cm] {: $n$th success epoch.};
			\node[right of=absState, xshift=1.1cm] {: Absorbing state.};			
			\node[state, on chain, fill opacity=1, fill=black]                 (Infinity) {\color{white}$\boldsymbol\infty$};
			\node[on chain]                   (g2) {$\cdots$};
			\node[state, on chain]                 (N) {$n$};
			\node[on chain]                   (g) {$\cdots$};
			\node[state, on chain]                 (1) {$1$};
			\node[state, on chain]                 (0) {$0$};
			\draw[
			>=latex,
			auto=right,                      
			loop above/.style={out=75,in=105,loop},
			every loop,
			]
			
			(0) edge[bend right = 30,below]	node{\phantom{aa}$q_{1}\cdots q_{n-1}p_n$}	(N)	
			(0) edge[bend right = 42, below]	node{$q_{1}q_2\cdots $}	(Infinity)						
			
			(1) edge[bend left = 50,above]	node{\phantom{ddd}$q_2q_3\cdots q_{n-1}p_n$}	(N)	
			(1) edge[bend left = 70,above]	node{$q_{2}q_{3}\cdots$}	(Infinity)	
			
			(N) edge[bend left = 50,above]	node{$q_{n+1}q_{n+2}\cdots$}	(Infinity)

			(0) edge             node {$p_1$} (1)
			(1) edge             node {$p_2$}   (g);
		\end{tikzpicture}
		\caption{The Markov Chain embedded in success epochs}
		\label{fig.M}
	\end{figure}

	The one-step transition matrix  is given by $P\coloneqq \left(P(i,j)\right)_{0 \le i, j \le \infty}$, where
	\begin{equation}\label{def.embedded.chain.poissonbinom}
		P(i, j) \coloneqq
		\begin{dcases}
			0& ~~~~\text{  } i  \le j,\\
			p_j\prod_{k=i+1}^{j-1} q_k & ~~~~\text{  } i  <j < \infty,\\
			\prod_{k\ge i+1} q_k & ~~~~\text{  } j = \infty.
		\end{dcases} 
	\end{equation}
	By construction, \( P \) is a stochastic matrix. This follows directly  from (\ref{def.embedded.chain.poissonbinom}), since for any state $i \in S$,
	\begin{alignat*}{2}
		\sum_{j  \in { S} \setminus \{ \infty\} } P(i, j)  &=
		\sum_{j > i} \left\{ 
		\prod_{k=i+1}^{j-1} q_k - \prod_{k=i+1}^{j} q_k  \right\}=1-P(i, \infty) 
	\end{alignat*}

	We define $M$ as the Markov chain \textit{embedded} in success epochs of the Bernoulli trials. A Markov chain is said to be $\TP{r}$ if its transition matrix is $\TP{r}$ (see p. 42 in \cite{KarlinMarkov}). We now show that $M$ satisfies this property.

	\begin{thm}\label{prop:tp.stochastic.operator}
		The Markov chain $M$  is $\TP{}$.
	\end{thm}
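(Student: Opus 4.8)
The plan is to exhibit the transition matrix $P$ as a positive diagonal rescaling of the step (Heaviside) kernel, for which total positivity of every order is classical, and then to reattach the absorbing state by a cofactor argument. Writing $Q_i \coloneqq \prod_{k=1}^{i} q_k$ with $Q_0 \coloneqq 1$ and $Q_\infty \coloneqq \prod_{k\ge 1} q_k$, the defining formula separates, on the increasing support $i<j$, into the product form
$$P(i,j) = \frac{p_j\, Q_{j-1}}{Q_i}\quad (i<j<\infty), \qquad P(i,\infty) = \frac{Q_\infty}{Q_i}.$$
Hence, setting $a_i \coloneqq 1/Q_i$, $b_j \coloneqq p_j Q_{j-1}$ for $1 \le j < \infty$ and $b_\infty \coloneqq Q_\infty$, every entry in a finite row $i \in \{0,1,2,\dots\}$ can be written uniformly as $P(i,j) = a_i\, b_j\, \1(i<j)$, while the column $j=0$ vanishes identically. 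Note that $a_i > 0$ and $b_j \ge 0$ throughout (with $b_j>0$ for finite $j$), so these are legitimate nonnegative scalings.

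First I would record the factorisation $\tilde P = D_a\, H\, D_b$ for the submatrix $\tilde P$ on the finite rows, where $D_a \coloneqq \diag(a_i)$, $D_b \coloneqq \diag(b_j)$ and $H \coloneqq (\1(i<j))$ is the step kernel. The step kernel is totally positive of all orders: reindexing $j \mapsto j-1$ turns $\1(i<j)$ into the order kernel $\1(i \le j')$, whose minors on increasing index sets are $0$--$1$ valued with a staircase pattern and hence nonnegative (see \cite{KarlinTPBook}). Because pre- and post-multiplication by the nonnegative diagonals $D_a, D_b$ multiplies each minor of $H$ on rows $R$ and columns $C$ by the nonnegative factor $\left(\prod_{i\in R} a_i\right)\!\left(\prod_{j\in C} b_j\right)$, the matrix $\tilde P$ inherits $\TP{}$ from $H$.

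It then remains to reattach the absorbing row $P(\infty,\cdot)$, which equals the unit vector supported at the last column. For any square minor of $P$ that selects this bottom row: if the column $\infty$ is not among the chosen columns the row is null and the minor vanishes; if $\infty$ is chosen, the single nonzero entry $1$ sits in the bottom-right corner, so a cofactor expansion along that row (with sign $(-1)^{2n}=+1$) collapses the minor to a minor of $\tilde P$, which is nonnegative by the previous step. The identically zero column $j=0$ likewise produces only vanishing minors. Consequently every minor of $P$, of every order, is nonnegative, i.e.\ $M$ is $\TP{}$.

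The main obstacle is precisely the absorbing state $\infty$: its column luckily conforms to the factorisation (there $\1(i<\infty)=1$), but its self-loop $P(\infty,\infty)=1$ breaks the product form and so must be treated separately, as above. A secondary point to dispatch is the degenerate case $q_k=0$ (i.e.\ $p_k=1$), in which $a_i=1/Q_i$ is undefined; there the offending entries $P(i,j)$ vanish outright, so the staircase structure of the relevant minors is preserved, and in any event the conclusion follows by continuity, total positivity being closed under entrywise limits of the $p_k$.
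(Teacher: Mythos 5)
Your proof is correct and follows essentially the same route as the paper: both exhibit $P$ as a positive diagonal rescaling of the step kernel $\1(i<j)$ (your $D_a H D_b$ is precisely the paper's $P = D_1^{-1} O D_2$ with $D_1 = \diag\bigl(\prod_{k \le i} q_k\bigr)$ and $D_2 = \diag\bigl(p_j \prod_{k \le j-1} q_k\bigr)$) and then invoke total positivity of the order kernel together with closure of $\TP{}$ under multiplication by nonnegative diagonal matrices. If anything, your explicit cofactor expansion for the absorbing row and your limiting argument for the degenerate case $q_k = 0$ are slightly more careful than the paper, which folds the state $\infty$ into $O$ via the convention $p_\infty = 1$ and treats the minors meeting the last row and column only tersely.
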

	\begin{proof}
		Multiply each row $i\in S$ of  $P$ by $\prod_{k=1}^{i-1}q_{k}$ and divide  each column   $j \in S$ of the resulting matrix by $ p_j\prod_{k = 1}^{j-1} q_k $, to obtain the matrix $O \coloneqq D_1PD_2^{-1}$, where
		\begin{equation}\label{def.ABC}
			\begin{aligned}
				D_1 &= \diag(1, q_1, q_1q_2. \ldots, \prod_{k=1}^{n}q_{k}),\\
				D_2 &= \diag(1, p_1,  p_2q_1, \ldots,  p_{n}\prod_{k=1}^{n-1}q_{k}),\\[5pt]
				O&\coloneqq D_1PD_2^{-1} = \left(\begin{array}{cccccc}
					0 & 1 & 1 & \cdots& 1\\
					0 & 0 & 1 &   \cdots & 1\\
					\vdots & \ddots  &\ddots &\ddots & \vdots\\
					\vdots & \ddots  &\ddots &\ddots & 1\\		
					0 & 0 & \cdots  &0 &1 \\				
				\end{array}\right),
		\end{aligned}
	\end{equation}
	with $n \ge 2$ and $p_\infty = 1$ by convention. To complete the proof, it suffices to demonstrate that \( D_1^{-1} \), \( D_2 \), and \( O \) are \( \TP{} \), since \( P = D_1^{-1} O D_2 \) and the fact that \( \TP{} \) matrices are closed under matrix multiplication. The diagonal matrices \( D_1^{-1} \) and \( D_2 \) are trivially \( \TP{} \). For the matrix $O$, if we exclude the terminal state, the transpose of the remaining submatrix is the kernel representation of the sequence \( (0,1,1,\dots,1)\).  But this  is a constant sequence with a leading zero, hence the corresponding  kernel representation is  \( \TP{} \). Including the terminal state adds a row and column that preserve nonnegativity of all minors. Since all minors of all orders are nonnegative; hence   \( O \) is \( \TP{} \), completing the proof.

\end{proof}

\begin{lemma}\label{lemma.nth.transition.quasi}
	For each $n \in \mn$, the $n$-step transition matrix of $M$ preserves quasi-unimodality.
\end{lemma}
\begin{proof}
	Because both the class of totally positive matrices and the class of stochastic matrices are closed under multiplication,  \( P^n \) is a stochastic \( \TP{} \) matrix  for every \( n \in \mathbb{N} \). The claim follows directly from Theorem \ref{thm.unimodality}.
\end{proof}	

\section{Optimality of the myopic strategy}\label{sec4}
In \cite{FergusonOdds}, Ferguson introduced a general framework for best-choice problems using the  {martingale method} in the sense of \cite{Peskir}. He applied this framework to extend Bruss' Odds Theorem \cite{Odds} to an infinite number of trials. Building on Ferguson's  approach, \cite{TamakiOdds}, followed by  \cite{MatsuiAno}, established the optimality of the myopic strategy for the $m$th last-success problem and the $\ell$th-to-$m$th last-success problem, respectively. In this section, we rely on the Markovian structure of the Bernoulli stopping problem to derive fundamental results connecting  the quasi-unimodality of the reward sequence, the unimodality of the payoff sequence, and  the optimality of the myopic strategy. 

 Unless otherwise stated,   $f \coloneqq(f_k)_{ k \in S}$ and $g\coloneqq (g_k)_{ k \in S}$ denote the stopping reward and continuation payoff sequences of the Bernoulli stopping problem, where  $S = \mn \cup \{0, \infty\}$.
\subsection{Existence}

To establish the existence of an optimal stopping rule $\tau$ via the principle of optimality, we must verify that  $\mE{Y_{\tau}}$ exists. This  can be achieved by ensuring  that $\sup_k {Y_k}$ has a finite expectation. According to the general facts of optimal stopping theory, an optimal rule exists  under the following two conditions  (see Chapter 3 in \cite{FergusonBook} or p.2 in \cite{Peskir}): 
\begin{alignat}{2}
	\mEg { \sup_k \abs{Y_k}} < \infty, \tag{A1} \label{eqn.A1}\\ 
	\limsup_{k \to \infty} Y_k \le Y_\infty \text{ a.s.} \tag{A2}\label{eqn.A2},
\end{alignat}

\begin{thm}\label{def.existence}
	
	Consider a Bernoulli stopping problem with success profile $p$ and a bounded  stopping reward sequence $f$. Suppose  $f_\infty \ge  0$ is the reward for never stopping. Assumptions (\ref{eqn.A1}) and (\ref{eqn.A2}) hold provided one of the following conditions is satisfied 
	\begin{enumerate}[label=\rm(\roman*)]
		\item$ \sum_k \abs{f_k} < \infty$,
		\item $\sum_k p_k < \infty$, 
		\item $\sum_k {\abs{f_k}}p_k < \infty$ and $\limsup_{k \to \infty} \abs{{f_k}} \le f_\infty$.
	\end{enumerate}
\end{thm}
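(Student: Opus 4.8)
The plan is to treat the two requirements \eqref{eqn.A1} and \eqref{eqn.A2} separately and to observe that, under the standing hypothesis that $f$ is bounded, \eqref{eqn.A1} comes essentially for free, so that all of the work goes into \eqref{eqn.A2}. Writing $\bar f \coloneqq \sup_{k} f_k < \infty$, the pointwise bound $0 \le Y_k = f_k \1(X_k=1) \le f_k \le \bar f$ together with $Y_\infty = f_\infty \ge 0$ gives $\sup_k Y_k \le \max(\bar f, f_\infty) < \infty$, so that $\mE{\sup_k Y_k} < \infty$ holds under each of (i)--(iii) without further argument. It therefore remains to verify, in each of the three cases, that $\limsup_{k\to\infty} Y_k \le f_\infty$ almost surely, since $Y_\infty = f_\infty$.

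For case (i), I would argue deterministically: $\sum_k f_k < \infty$ forces $f_k \to 0$, and the envelope $0 \le Y_k \le f_k$ then yields $Y_k \to 0$ surely, whence $\limsup_k Y_k = 0 \le f_\infty$. For case (ii), the natural tool is the first Borel--Cantelli lemma: $\sum_k p_k = \sum_k \prob(X_k=1) < \infty$ gives $\prob(X_k = 1 \text{ infinitely often}) = 0$, so on an event of probability one only finitely many successes occur; beyond the last success $Y_k = 0$, and again $\limsup_k Y_k = 0 \le f_\infty$.

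The substantive case is (iii), where neither deterministic decay of $f$ nor almost-sure finiteness of the success set is available, and this is where I expect the main care to be needed. Here I would combine a first-moment estimate with Borel--Cantelli. Since $\mE{Y_k} = f_k p_k$, Markov's inequality gives $\prob(Y_k > \delta) \le \delta^{-1} f_k p_k$ for every $\delta > 0$, so $\sum_k \prob(Y_k > \delta) \le \delta^{-1}\sum_k f_k p_k < \infty$; the first Borel--Cantelli lemma then yields $\prob(Y_k > \delta \text{ i.o.}) = 0$, and letting $\delta$ run through a sequence decreasing to $0$ gives $\limsup_k Y_k = 0 \le f_\infty$ almost surely. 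The complementary tail hypothesis $\lim_{k} f_k \le f_\infty$ provides an independent route to the same conclusion and clarifies the role of $f_\infty$: for each $\epsilon > 0$ one has $f_k \le f_\infty + \epsilon$ for all large $k$, hence $Y_k \le f_\infty + \epsilon$ eventually, and letting $\epsilon \downarrow 0$ recovers \eqref{eqn.A2}; the summability $\sum_k f_k p_k < \infty$ simultaneously guarantees that the expected total reward $\mE{\sum_k Y_k}$ is finite. The only delicate point is the passage through the Markov bound into Borel--Cantelli, and I anticipate this first-moment step, rather than any measurability or convergence subtlety, to be the crux of the argument.
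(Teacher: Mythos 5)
Your proof is correct, and it departs from the paper's own argument in two substantive ways. For (\ref{eqn.A1}) the paper does not lean on boundedness as you do: it bounds $\mE{\sup_k Y_k}$ by $\sum_j f_j p_j$ (in effect via $\sup_k Y_k \le \sum_k Y_k$) and then checks finiteness of that sum case by case --- under (i) because $p_j \le 1$, under (ii) because $\sup_k f_k < \infty$, and under (iii) directly from its first hypothesis. Your shortcut, that the standing hypothesis $\bar f \coloneqq \sup_k f_k < \infty$ forces $\sup_k Y_k \le \max(\bar f, f_\infty)$ deterministically, is legitimate for the theorem as stated and makes (i)--(iii) relevant only to (\ref{eqn.A2}); the paper's longer route has the compensating virtue of surviving in cases (i) and (iii) if the boundedness hypothesis were dropped. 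The more interesting divergence is case (iii): the paper verifies (\ref{eqn.A2}) there by invoking the second hypothesis $\lim_{k\to\infty} f_k \le f_\infty$ (so that $Y_k \le f_k$ settles the matter deterministically), whereas your first-moment argument --- $\prob(Y_k > \delta) \le \delta^{-1} f_k p_k$, summable, hence by Borel--Cantelli $\limsup_k Y_k \le \delta$ a.s.\ for each $\delta$ along a sequence decreasing to zero --- extracts $\limsup_k Y_k = 0$ a.s.\ from the summability hypothesis alone. This shows the tail condition in (iii) is logically redundant, a small but genuine strengthening that the paper's proof does not deliver. Your cases (i) and (ii) coincide with the paper's treatment ($f_k \to 0$ deterministically; finitely many successes a.s.\ by Borel--Cantelli). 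One incidental benefit of your route: the paper's assertion that $\sup_k Y_k = f_j$ ``with probability $p_j$'' is loose, since those events overlap and their probabilities need not sum to one; only the inequality $\mE{\sup_k Y_k} \le \sum_j f_j p_j$ is actually needed there, and your argument sidesteps the issue entirely.
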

\begin{proof}
	From (\ref{def.payoof}), 
	$\sup_k {{Y_k}} = f_j^+ \coloneqq \max(f_j, 0)$ with probability $p_j$ for each $j \ge 1$.  Thus,
	\begin{alignat}{2}
		\mEg{\sup_k {{Y_k}}} \coloneqq  \sum_{j \ge 1}   {f_j}^+ {p_j}
		&\le  \sum_{j \ge 1}   \abs{f_j} {p_j},  \label{def.ineq.expectation.basic}\\
		&< \max( \sup_k \abs{f_k}, 1) \min\bigg( \sum_j \abs{f_j},  \sum_j p_j\bigg)		\label{def.ineq.expectation},
	\end{alignat}
	Condition (\ref{eqn.A1}) follows from \rm(i) or  \rm(ii) in view of (\ref{def.ineq.expectation}),  or from \rm(iii) via (\ref{def.ineq.expectation.basic}). Since  $Y_\infty = f_\infty$ a.s.,  condition (\ref{eqn.A2}) holds whenever $\limsup_{k \to \infty} f_k \1(X_k = 1) \le f_\infty$ a.s. This is guaranteed if either $\limsup_{k \to \infty} {f_k}  \le f_\infty$ or  $\lim\sup_{k \to \infty} X_k =0$ a.s.  The first instance is explicitly required in \rm(iii) and follows from $\abs{f_k} \to 0$ in \rm(i) 
	 since the series is absolutely convergent. The second instance, $\lim\sup_{k \to \infty} X_k =0$ a.s.,  is implied by  \rm(ii)  via the first Borel-Cantelli lemma. 
	 This concludes the proof.
\end{proof}

\noindent Having disposed of the existence step, we now examine the optimality of the myopic strategy, assuming throughout that $f$ is  nonnegative. Before proceeding, we recall the following fundamental result.
\begin{thm}[\cite{CRS, FergusonBook}]\label{thm.monotone.finite}
	In a finite-horizon monotone stopping rule problem, the myopic strategy is optimal.
\end{thm}

\subsection{The sufficient condition of Ferguson}

\begin{thm}[Ferguson's Theorem]\label{thm.ferguson}
	Consider a Bernoulli stopping problem with nonnegative stopping reward and  continuation payoff sequences $f$ and $g$, respectively. Assume the hypotheses of Theorem \ref{def.existence} hold. 	A sufficient  condition for the myopic strategy to be optimal is that the ratio $g/f$ (or equivalently, the  difference $g-f$) is nonincreasing.
\end{thm}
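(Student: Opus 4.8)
The plan is to verify that the monotonicity criterion from the introduction holds, i.e.\ that the sequence of events $A_k = \{Y_j \ge \mE{Y_{j+1} \mid \mf_k}\}$ is absorbing, which by the principle of optimality guarantees optimality of the myopic rule. Recall from \eqref{ch1.def.ola.rule} that, under the Markovian embedding, the event $A_k$ reduces to $\{X_k = 1 \text{ and } f_k \ge g_k\}$. The myopic rule stops at the first success epoch $j$ for which $f_j \ge g_j$. To show this rule is optimal via the one-step-lookahead principle, the key is to establish that the set of indices on which stopping is preferred to continuing is \emph{absorbing}: once $f_j \ge g_j$ holds, it continues to hold for all subsequent success epochs. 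In other words, I would show that the set $\{k \in S : f_k \ge g_k\}$ is an up-set (upward closed) in the natural order on $S$.

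First I would work with the difference formulation $g - f$ (the ratio version follows by an identical argument applied multiplicatively, using $f_k > 0$). The hypothesis is that $g_k - f_k$ is nonincreasing in $k$. The crucial observation is that $\{k : f_k \ge g_k\} = \{k : g_k - f_k \le 0\}$, and since $g_k - f_k$ is nonincreasing, once this difference becomes nonpositive at some index $k_0$, it remains nonpositive for every $k \ge k_0$. Hence the stopping region is an up-set, which is precisely the absorbing property required. Concretely, I would verify that if $A_{k}$ holds (on the relevant event that a success has occurred) then $A_{k+1}$ holds, translating the pathwise monotone-case condition $A_0 \subset A_1 \subset A_2 \subset \cdots$ into the deterministic statement that $\{g_k - f_k \le 0\}$ is upward closed.

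The second step is to confirm that this absorbing structure, combined with the existence conditions already secured in Theorem \ref{def.existence} (so that $\mE{Y_\tau}$ is well-defined and an optimal rule exists), actually yields optimality of the myopic rule rather than merely making it a candidate. Here I would invoke the standard theory of monotone stopping problems: for a monotone problem satisfying the integrability assumptions \eqref{eqn.A1} and \eqref{eqn.A2}, the one-step-lookahead rule is optimal (this is the classical result of \cite{CRS}, see also \cite{FergusonBook}). The work is thus to check that the absorbing condition on events is exactly the hypothesis of that theorem, with the translation to $g - f$ nonincreasing handled in the previous paragraph.

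The main obstacle I anticipate is the careful handling of the equivalence between the ratio condition $g/f$ nonincreasing and the difference condition $g - f$ nonincreasing, and ensuring the argument is insensitive to which formulation is used. Strictly, these two conditions are not equivalent in general, so the parenthetical ``equivalently'' in the statement must be interpreted as: \emph{either} condition suffices. I would therefore treat them separately but in parallel, noting that both conditions imply that the sign of the relevant comparison ($f_k$ versus $g_k$) can only switch from ``continue'' to ``stop'' at most once as $k$ increases, never back. A secondary subtlety is the behaviour at the terminal state $\infty$ and the boundary convention $f_0 = 0$, which I would address by confirming that the absorbing property is not disrupted at these endpoints.
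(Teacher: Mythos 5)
Your proposal follows essentially the same route as the paper: both arguments reduce the theorem to the monotone case of optimal stopping by showing that the nonincreasing hypothesis makes the stopping set $\{k : g_k - f_k \le 0\}$ (equivalently $\{k: g_k/f_k \le 1\}$) upward closed, so that the events $B_n = \{X_n = 1 \text{ and } g_n \le f_n\}$ are absorbing and the principle of optimality delivers optimality of the myopic rule. Your caveat that the ratio and difference conditions are not literally equivalent and must be read as alternative sufficient conditions is a sound refinement of the paper's looser parenthetical (the paper likewise treats them in parallel, handling $f_n = 0$ by the convention $g_n/f_n = +\infty$, a boundary case you flag but resolve instead by assuming $f_k > 0$).
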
	
\begin{proof}
	At stage $k \in \mn$, if $f_k = 0$, then stopping is not optimal, since we can do at least as well by continuing one more step. Consequently, we define both $g_k/f_k$	and $g_k-f_k$ to be $+\infty$ when $f_k=0$ even if $g_k=0$. The myopic policy calls for stopping at stage $k$ if  $g_k - f_k \le 0$, which is equivalent to  $g_k / f_k \le 1$. Define the events $B_k \coloneqq \{ X_k =1 \text { and } g_k - f_k\le 0 \}= \{ X_k =1  \text { and } g_k/ f_k \le 1 \}.$  Recall that the problem is  monotone \cite{CRS} if $B_1 \subset B_2 \subset B_3 \subset \cdots \text{ a.s.}$
	By assumption, $g_k / f_k \le g_{k+1} / f_{k+1}$ for all $k$, so $g_{k+1} / f_{k+1} \le 1$ implies  $g_{k} / f_{k} \le 1$. Likewise,  since $g_k - f_k \le g_{k+1} - f_{k+1}$ holds for all $k$, the condition $g_{k+1} \le f_{k+1} $ implies $g_{k} \le f_{k}$. 
	Thus, the sequence  $(B_k)$ is absorbing, and the myopic strategy is optimal.
%
\end{proof}
Theorem 1 in \cite{FergusonOdds} is a special case of the following corollary.
\begin{cor}\label{cor.ferguson}
	Suppose there exists a scalar $w \in [0,1)$ and a real sequence  $h\coloneqq(h_k)_{k \in \mn}$ such that $g = h + w f $. A sufficient  condition for the myopic strategy to be optimal is that the ratio $h/f$  is nonincreasing.
\end{cor}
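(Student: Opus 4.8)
The plan is to reduce Corollary \ref{cor.ferguson} to the already-established Theorem \ref{thm.ferguson} by exhibiting an equivalent Bernoulli stopping problem whose stopping and continuation payoffs satisfy the hypothesis of that theorem. The key observation is that the decomposition $g = h + wf$ with $0 \le w < 1$ means the myopic stopping rule, which stops at a success at stage $n$ when $g_n \le f_n$, is governed by the sign of $g_n - f_n = h_n - (1-w) f_n$. Since $1 - w > 0$, this has the same sign as $h_n/((1-w) f_n) - 1$, equivalently as $h_n/f_n - (1-w)$. Thus the stopping region is determined entirely by the ratio $h/f$ compared against the \emph{constant} threshold $1-w$.

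First I would make this precise by noting that the myopic rule for the original problem, $\tau = \min\{ j \ge 1 : X_j = 1 \text{ and } f_j \ge g_j\}$, can be rewritten using $f_j \ge g_j \iff (1-w) f_j \ge h_j \iff h_j/f_j \le 1-w$. The crucial point is that the inequality $g_n \le f_n$ defining when stopping is preferred coincides with $h_n \le (1-w) f_n$. Next I would verify that the problem is monotone in the sense of Theorem \ref{thm.ferguson}, i.e.\ that the events $B_n \coloneqq \{ X_n = 1 \text{ and } g_n \le f_n\}$ form an increasing (absorbing) sequence. Since $g_n \le f_n \iff h_n/f_n \le 1-w$, and the threshold $1-w$ is a constant independent of $n$, the monotonicity of the sequence of events is equivalent to: whenever $h_{n+1}/f_{n+1} \le 1-w$, we also have $h_n/f_n \le 1-w$. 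This is guaranteed precisely by the assumption that $h/f$ is nonincreasing.

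The cleanest way to present this is therefore to follow the structure of the proof of Theorem \ref{thm.ferguson} directly: define $B_n \coloneqq \{ X_n = 1 \text{ and } h_n/f_n \le 1-w\}$ (handling $f_n = 0$ by the convention $h_n/f_n = +\infty$, so that stopping is never prescribed there, consistent with $g_n \le f_n$ failing when $f_n = 0$), observe that $B_n = \{ X_n = 1 \text{ and } g_n \le f_n\}$ so these are indeed the myopic stopping events, and then invoke the nonincreasing hypothesis on $h/f$ to conclude $B_n \subset B_{n+1}$ almost surely for all $n$. By the principle of optimality for monotone problems \cite{CRS, FergusonBook}, the sequence $(B_n)$ being absorbing yields optimality of the myopic rule.

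The only subtlety — and the step I would take most care over — is the bookkeeping at boundary and degenerate stages, namely the endpoints $k = 0, \infty$ (where $h_0 = h_\infty = 0$ is imposed) and any stage with $f_n = 0$. At such a stage the ratio $h_n/f_n$ is declared $+\infty$, so $B_n$ does not fire, matching the observation in Theorem \ref{thm.ferguson} that stopping is never optimal when the stopping payoff vanishes; I would confirm this convention is consistent with the decomposition $g = h + wf$ and does not disturb the chain of inclusions. I do not anticipate a genuine obstacle here: the content of the corollary is essentially that a constant threshold comparison against a monotone ratio reproduces the nonincreasing-ratio hypothesis of Theorem \ref{thm.ferguson}, so the proof is a short reduction rather than a new argument.
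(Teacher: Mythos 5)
Your proposal is correct and follows essentially the paper's own route: the paper's proof is the one-line observation that $g = h + wf$ gives $g/f = h/f + w$, hence $\Delta(g/f) = \Delta(h/f)$, so nonincreasing $h/f$ places the problem directly under Theorem \ref{thm.ferguson}; your version simply unfolds that theorem's monotone-case argument with the constant threshold $1-w$ in place of $1$. One small slip: in your second paragraph the implication runs the wrong way (nonincreasing $h/f$ yields $h_n/f_n \le 1-w \Rightarrow h_{n+1}/f_{n+1} \le 1-w$, not the converse you state), but your final paragraph's inclusion $B_n \subset B_{n+1}$ is the correct one, so the argument stands --- and the paper's own proof of Theorem \ref{thm.ferguson} exhibits the same inversion.
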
	
\begin{proof}
	If 	$w \ge 1$, stopping is never optimal since $f < g$, justifying the constraint on $w$. The result follows immediately from Theorem \ref{thm.ferguson}, since  $\Delta \displaystyle\frac{g}{ f}=\Delta \displaystyle\frac{h}{ f} $, where $\Delta$  is the difference operator.
\end{proof}

\subsection{Approximation of the infinite-horizon problem}
%
Suppose  there exists an optimal rule $\tau$  with optimal value  $V^*$.  Define its truncated form by  $\tau(n)=\min \{\tau, n\}$ for $n \in \mn$ with value $V^*_n$. One approach to tackle the infinite-horizon problem is to solve its truncated version  with $n$ trials then show that $\tau = \lim_{n \to \infty} \tau(n)$ and  $V^* = \lim_{n \to \infty} V^*_n$.   The topic is treated extensively in Chapter 5 of Ferguson's monograph \cite{FergusonBook}. This section provides details specific to the Bernoulli stopping case and discusses the conditions under which the approximation approach is valid.

We begin by verifying a uniform integrability condition, which will play a central role in the proofs.
  \begin{define}[Uniform integrability  \cite{CRS, FergusonBook}]
	Let  $T \coloneqq (T_k)_{k\ge 0}$ be a sequence of real random variables. If either of the following  holds:
	\begin{enumerate}
		\item $\lim_{a \to \infty} \sup \mE{ \1\{ \abs{T_n }> a\} } = 0,$
		\item $\sup \mE{ \abs{T_n}} < \infty$ and  $\mE{ \1\{A_n\}\abs{T_n}} \to 0$ as $n \to 0$, for  any $A_n$ such that $\prob(A_n) \to 0$ as $n \to \infty$,
	\end{enumerate}
	then $T$ is \textit{uniformly integrable}.
\end{define}
Define the sequence of random variables $T \coloneqq (T_k)$, where
\begin{equation}\label{def.T}
	T_k \coloneqq \sup_{j \ge k} \{ Y_j - Y_k\},
\end{equation}
and, as before, $Y_k=X_kf_k$ is the random payoff for stopping at trial $k$.

\begin{lemma}\label{lem.uniformT}
	If  $\sum_k p_k< \infty$ or $\sum_k \abs{f_k}< \infty$ then $T$ is uniformly integrable.
\end{lemma}
\begin{proof}
	For all $n \in \mn$,  $\mE{ \1(\abs{T_n} > a)} \le \mE{ \1(\sup_n \abs{Y_n} > a)} \le  {\mE{ {\sup_n \abs{Y_n}}}}/{a}$ by Markov inequality. By (\ref{def.ineq.expectation}),  $\mE{\sup_n \abs{Y_n}} < C\min( \sum_k p_k, \sum_k \abs{f_k})$ for some constant $C$. Hence, $\mE{ \1(\abs{T_n} > a)} \to 0$  as $a \to \infty$, proving the assertion.
\end{proof}
\begin{lemma}[Lemma 1 in Chapter 5, \cite{FergusonBook}]\label{lem.vanish}
	Suppose $T$ is a uniformly integrable sequence. If $(A_n)$ is any sequence of  events such that $\prob(A_n )\to 0$ as $n \to \infty$, then as $n \to \infty$, $\mE{\1\{ A_n\} \abs{T_n} } \to 0$.
\end{lemma}

\begin{thm}\label{thm.convergence}
	Suppose $f$ is nonnegative and $f_\infty=0$. 
If  $\sum_k p_k< \infty$ or $\sum_k {f_k}< \infty$, then	as $n \to \infty$, $\lim_{n \to \infty} V^*_n \to V^*$.
\end{thm}
\begin{proof}
	By hypothesis $f \ge 0$ and $f_\infty = 0$, therefore the random reward sequence $Y$ is nonnegative and 
	$\mE{Y_\infty} = 0$, respectively. By Theorem \ref{def.existence},   (\ref{eqn.A1}) and (\ref{eqn.A2}) are satisfied and hence an optimal rule exists.  We denote this rule $\tau$, with $\tau(n)=\min \{\tau, n\}$ for $n \in \mn$ being its  truncated form. Then
	$$
	\begin{aligned}
		0 \leq V^*-V^*_n & \leq \mE{Y_\tau}-\mE{ Y_{\tau(n)}} \\
		& =\mE{ \1\{ n<\tau<\infty\} \left(Y_\tau-Y_n\right)} +\mE{ \1\{ \tau=\infty\} \left(Y_\infty-Y_n\right) } \\
		& \le \mE{ \1\{ n<\tau<\infty\} \abs{T_n}} +\mE{Y_\infty} = \mE{ \1\{ n<\tau<\infty\} \abs{T_n}} 
	\end{aligned}
	$$
Since  $T \coloneqq (T_k)$ defined in (\ref{def.T})  is uniformly integrable  by Lemma \ref{lem.uniformT}, then
as $n \to \infty$, $\prob(\1\{ n<\tau<\infty\}) = 0$. Invoking Lemma \ref{lem.vanish} concludes the proof.
\end{proof}
This result demonstrates only convergence of the optimal value; it does not ensure that the myopic strategy remains optimal in the infinite-horizon setting. The next theorem and  its corollary furnish this guarantee.

\begin{thm}[Theorem 2, Chapter 5 in \cite{FergusonBook}]\label{thm.convergene.myopic}
	Suppose (\ref{eqn.A1}) and (\ref{eqn.A2}) are satisfied and suppose the problem is monotone. If  $V^*_n \to V^*$ as $n \to \infty$, then the myopic strategy is optimal.
\end{thm}

\begin{cor}\label{cor.monotone}
	Suppose $f$ is nonnegative and $f_\infty=0$. 
		Assume  the hypothesis of Theorem \ref{thm.convergence} is satisfied. 
	If the stopping problem is monotone then the myopic strategy is optimal. 
\end{cor}
\begin{proof}
The  assertion  follows from Theorem \ref{thm.convergene.myopic} 	since  $\lim_{n \to \infty} V^*_n \to V^*$ holds by Theorem \ref{thm.convergence}.
\end{proof}

This result is unsurprising, but monotonicity remains essential for proving optimality of the myopic rule and for passing to the infinite-horizon limit.  In the next section, we show that this condition is ensured by quasi-unimodal stopping rewards or unimodal continuation payoffs.
\subsection{Unimodality of the Continuation Payoff Sequence}
Ferguson's theorem provides a sufficient condition for optimality without imposing restrictions on the monotonicity patterns of the reward or payoff sequences. However,  it does not exploit the  structural relationship between the properties of $f$  and  $g$ inherent in Bernoulli stopping problems.  In the classical last-success problem, \cite{Odds} demonstrated  that monotonicity  follows implicitly from the unimodality of  $g$. Moreover,  \cite{FergusonDependent} argued that the unimodality of $g$, within the framework of best-or-worst choice problems and the bivariate secretary problem, ensures the optimality of the myopic policy. Despite these insights, the direct connection between  $f$ and $g$ was not utilised.  In what follows, we show how unimodal continuation payoffs guarantee the optimality of the myopic strategy.
%
%
%
%

\begin{thm} \label{thm.unimodal.myopic}
	Consider the Bernoulli stopping problem in which success epochs are governed by a Markov chain with transition matrix (\ref{def.embedded.chain.poissonbinom}). If the continuation payoff is unimodal, then the stopping problem is monotone.
\end{thm}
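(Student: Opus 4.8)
The plan is to convert the probabilistic claim into an elementary statement about the difference sequence $f-g$ by means of a two-term recurrence, and then to read off from the unimodality of $g$ that the problem is monotone, so that optimality of the myopic rule follows from the principle of optimality exactly as in the proof of Theorem \ref{thm.ferguson}.

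First I would record the recurrence linking the two payoff sequences. Since $g_{n-1}$ is the expected reward of continuing from state $n-1$ and stopping at the next success, conditioning on the trial $X_n$ (equivalently, splitting off the first column of the transition matrix (\ref{def.embedded.chain.poissonbinom})) gives
\begin{equation*}
	g_{n-1}=p_n f_n+q_n g_n,\qquad n\in\mn .
\end{equation*}
Writing $q_n=1-p_n$ and rearranging turns this into
\begin{equation*}
	g_{n-1}-g_n=p_n\,(f_n-g_n).
\end{equation*}
As $p_n>0$ for every $n$, the two differences $f_n-g_n$ and $g_{n-1}-g_n$ carry the same sign. The myopic rule (\ref{ch1.def.ola.rule}) stops at the first success $n$ with $f_n\ge g_n$; by the identity just displayed this is precisely the first success at an index where $g$ fails to increase, that is where $g_{n-1}\ge g_n$.

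I would then invoke unimodality. By Definition \ref{def.unimodel} the forward differences $g_n-g_{n-1}$ are first nonnegative and then nonpositive, so $g_{n-1}-g_n$ changes sign at most once, from nonpositive to nonnegative as $n$ increases past the interval of modes. Through the sign identity, $f_n-g_n$ is therefore $\le 0$ to the left of the modal interval and $\ge 0$ to its right: the continuation region precedes the stopping region. Consequently the stopping events $B_n=\{X_n=1,\ f_n\ge g_n\}$ form an absorbing (increasing) family, the problem is monotone in the sense of \cite{CRS}, and the myopic rule is optimal by the argument of Theorem \ref{thm.ferguson}; finiteness of the value and the requisite integrability are supplied by Theorem \ref{def.existence}. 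Note that one cannot simply quote Theorem \ref{thm.ferguson} itself, since unimodality of $g$ yields only a single sign change of $f-g$ and generally not monotonicity of $g-f$ or $g/f$; it is the monotone-case argument, not its ratio hypothesis, that is used.

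The step I expect to be the main obstacle is making the absorbing property rigorous when $g$ is only weakly unimodal. On the interval of modes, and more delicately at any flat stretch lying on the increasing side of $g$, one has $g_{n-1}=g_n$ and hence $f_n=g_n$, so the myopic comparison records indifference even though the one-step value $g_n$ can strictly underestimate the true continuation value. The crux is therefore to control these ties: one must verify that stopping at an indifference state does not forfeit value, so that the first-success structure of the myopic rule still realises the value of the genuinely monotone rule that stops just past the mode. When $g$ is strictly unimodal this difficulty evaporates and the monotone case is immediate; the general case requires carrying the indifference bookkeeping carefully through the optimality argument.
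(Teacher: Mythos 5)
Your argument is, step for step, the paper's own proof: your identity $g_{n-1}-g_n=p_n(f_n-g_n)$ is exactly the recursion (\ref{recursion.g.f}) and the equivalence (\ref{eqn.inequality.ola.payoff}), obtained there by the same first-success/first-visited-site decomposition of the transition matrix (\ref{def.embedded.chain.poissonbinom}); and like the paper you then conclude that the sets $B_n=\{X_n=1,\ f_n\ge g_n\}$ are absorbing and invoke the monotone-case argument from the proof of Theorem \ref{thm.ferguson} rather than its ratio hypothesis. So there is no methodological divergence to report.

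What deserves comment is your final paragraph. The tie obstacle you isolate is \emph{not} resolved in the paper: its proof passes directly from ``there exists $k_*$ with $f_k\ge g_k$ for all $k\ge k_*$'' to ``$(B_n)$ is absorbing,'' which silently assumes $f_k<g_k$ below the mode --- precisely what fails on a flat stretch of the ascent, where (\ref{recursion.g.f}) forces $f_n=g_n$. Your worry is substantive, not bookkeeping: take $p_1=\dots=p_4=\tfrac{1}{2}$, with $p_k>0$ small and summable thereafter, and $f=(0,\tfrac{1}{4},0,1,0,0,\dots)$, $f_\infty=0$. Then $g=(\tfrac{1}{8},\tfrac{1}{4},\tfrac{1}{4},\tfrac{1}{2},0,\dots)$ is unimodal in the sense of Definition \ref{def.unimodel}, yet the rule (\ref{ch1.def.ola.rule}), with its weak inequality, stops on a success at epoch $2$ (since $f_2=g_2$) and earns $\tfrac{1}{2}\cdot\tfrac{1}{4}+\tfrac{1}{2}\cdot\tfrac{1}{2}=\tfrac{3}{8}$, whereas the threshold rule at $4$ earns $g_3=\tfrac{1}{2}$. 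So the statement needs either strict increase of $g$ below its modal interval, or a myopic rule that breaks ties in favour of continuing; in the latter case the descending-side ties are indeed harmless by the check you sketch, since past the mode $g$ is nonincreasing and $f_n=g_n=\max_{r>n}g_{r-1}$, so indifference costs nothing there. In short: you have reproduced the paper's proof, and the one step you leave open is a genuine soft spot that the paper's own proof shares rather than closes.
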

\begin{proof}
	For any set $E\subset \{1,2,\ldots,\infty\}$, there exists a strategy that stops immediately when the Markov chain $M$ enters $E$. Conditional that $M$ ever entering $E$, the first visited  site $\min\{k\in E: X_k=1\}$ is independent of the path of $M$ prior to hitting $E$. By definition, $g_{k-1}$ is the expected payoff of the strategy associated with the stopping set $E_k:=\{k, k+1,\ldots,\infty\}$, given by
	\begin{equation}\label{eqn.expected.ola.payoff}
		g_{k-1}=\sum_{j\ge k} P(k-1,j)f_j.
	\end{equation}
	By monotonicity, once the chain enters \(E_k \), it cannot leave. 	The first site visited in $E_k$ is either $k$ or an element of $E_{k+1}$, leading to the recursion
	\begin{equation}\label{recursion.g.f}
		g_{k-1}=p_k f_k + (1-p_k)g_{k},
	\end{equation}
	which implies
	\begin{equation}\label{eqn.inequality.ola.payoff}
		g_{k-1}\geq g_{k}\Longleftrightarrow  f_k\geq g_{k}.
	\end{equation}
	Because $g$ is unimodal, this equivalence ensures the existence of a threshold $k_*$ such that $f_k \ge g_k$ for all $k \ge k_*$. By the proof of Theorem \ref{thm.ferguson},   the sets $(A_k)$, where $A_k \coloneqq \{ X_k =1 \text { and } g_k - f_k \le 0 \},$ are absorbing. Consequently, the problem is monotone, and the myopic strategy is optimal.
\end{proof}

\begin{cor}\label{cor.threshold}
	The myopic strategy  is a threshold strategy, where the optimal value equals  the mode of the continuation payoff and the optimal threshold corresponds to the index of that mode.
\end{cor}
\begin{proof}
	Let $E_k:=\{k, k+1,\ldots,\infty\}$ and suppose the mode interval of $g$ is $[k_-, k_+]$. Using (\ref{eqn.inequality.ola.payoff}), the optimal stopping set is given by   
	\begin{equation}\label{eq.stoppingset}
B \coloneqq \{j \in \mn: ~X_j =1 \text { and } f_j-g_j \ge 0 \} = \{ j \in E_{k_-}: X_j = 1 \}
	\end{equation}
	By monotonicity,  the myopic rule  coincides with the first visit to $B$, namely
	$\min \{j \ge k_-: X_j=1\}$. But this defines a threshold strategy with threshold \( k_- \) and expected payoff \( g_{k_-} \), completing the proof.
\end{proof}

\subsection{Quasi-Unimodality of the Stopping Payoff Sequence}

In the context of the relatively best-choice problem with a fixed number of trials, Lemma 2 in \cite{FergusonHardwick} established via backward induction that the unimodality of the stopping payoff \( f \) secures the optimality of the myopic strategy. Since this proof is independent of the success profile, it remains valid for Bernoulli stopping problems. The authors also demonstrated that the unimodality of \( f \) ensures the optimality of the myopic strategy in the discounted duration model (see p. 44 in \cite{FergusonHardwick}). Because the monotone case of optimal stopping driven by the (quasi-)unimodality  of \( f \) has not been extensively explored, we extend this result to Bernoulli stopping problems.

The following elementary lemma will be useful in proving our main result,

\begin{lemma}\label{lem.sup}
Let $\alpha \coloneqq(\alpha_k)_{k \in \mn}$ be a bounded sequence of nonnegative numbers. If  $\alpha_k \to 0$ as $k \to \infty$, then  $\alpha$ attains its maximum.
\end{lemma}
\begin{proof}
	Let $M \coloneqq \sup \alpha$. Clearly, $M \in [0, \infty)$ since $a$ is nonnegative and bounded by assumption. Because $\alpha_k \to 0$,  there exists $N \in \mathbb{N}$  such that $\alpha_k < M/2$ for all $k > N$. This implies that $\sup \alpha =\sup\bigl\{\alpha_1, \alpha_2, \ldots, \alpha_N\bigr\}$. Since this set is finite, its supremum coincides with the maximum. Thus, $\alpha$ attains its maximum.
\end{proof}
\begin{thm}\label{thm.unimodality_stopping_reward}
	Consider the Bernoulli stopping problem with nonnegative and bounded stopping reward $f$, where $f_\infty=0$. Assume $\sum_k p_k< \infty$, where $p_k \in (0,1 )$. 	If $f$ is quasi-unimodal, then the stopping problem is monotone.
	
\end{thm}

\begin{proof}
	By Lemma \ref{lemma.nth.transition.quasi}, $g=Pf$ is quasi-unimodal. To prove  the unimodality of $g$, is it sufficient to show that it attains its supremum.  Let $M \coloneqq \sup g$. Clearly,  $0 \le M = \sup (Pf) \le \sup f < \infty$. If $M=0$, then $g=0$ and the proof is complete. Suppose $M > 0$. For $i \in S$, we have
	\begin{alignat*}{2}
		\lim_{i \to \infty} g_i = \lim_{i \to \infty}\sum_{j=i+1}^\infty P(i,j) f_j &\leq\sup_{j \geq i+1} f_j  \left( \lim_{i \to \infty}\sum_{j=i+1}^\infty P(i,j) \right) \\
		&= \sup_{j \geq i+1} f_j \left( \lim_{i \to \infty}\sum_{j=i+1}^\infty p_j \prod_{k=i+1}^{j-1} (1 - p_k)\right) \to 0
	\end{alignat*}
Since $g$ is nonnegative, invoking Lemma \ref{lem.sup} completes  the proof.
\end{proof}

\begin{cor}\label{cor.quasi_unimodality_nstopping}
Assume $n \in \mn$.	If $f$ is quasi-unimodal, then the $n$-step continuation payoff	$P^nf$  is unimodal.
\end{cor}
\begin{proof}
	Obvious by induction on $n$.
\end{proof}

\begin{lemma}\label{lem.quasi_unimodality_sum_nstopping}
Let $m_1, m_2, \ldots, m_N$ be $N \ge 1$ arbitrary positive integers. 	If $f$ is nonincreasing on $S \setminus \{\infty\}$, then the  payoff	$(P^{m_1} + P^{m_2} +  \cdots + P^{m_N})f$ is unimodal.
\end{lemma}
\begin{proof}
 Fix $N \ge 1$ positive integers $m_1, m_2, \ldots, m_N$ and  let  $g^{(i)} = P^i f $ for $i \in \{ m_1, m_2, \ldots, m_N\}$. By the previous corollary, each $g^{(i)}$ is unimodal. Hence, $h \coloneqq (P^{m_1} + + P^{m_2} +  \cdots + P^{m_N})f = g^{(m_1)} +  \cdots + g^{(m_N)}$ attains its maximum. It remains to show that $h$ possesses at most two sign changes. Since $f$ is nonincreasing on $S \setminus \{\infty\}$, the VD property of $P$ implies that each $g^{(i)}$  is nonincreasing. It follows that  $h$ is nonincreasing on $S \setminus \{\infty\}$. Because $h$ is nonnegative, it remains quasi-unimodal on $S$, which is the desired result.
\end{proof}

\begin{rem}
	$ $
	\begin{enumerate}[label=\rm(\roman*)]
		\item 	Theorem \ref {thm.convergence} is a slight modification of Theorem 3 in chapter 5 of \cite{FergusonBook}, where the assumption $\limsup_n Y_n = Y_\infty$ a.s. is replaced by requiring  $f$  to be nonnegative and $f_\infty=0$.
		
		\item In the setting with infinitely many successes, an optimal strategy may still exist as seen under the assumptions of Theorem \ref{thm.unimodal.myopic}.  In this case, $\sum_k p_k = \infty$ is equivalent to $\tau<\infty$ a.s. \rm(i) or \rm(iii) of Theorem  \ref{def.existence} may or may not hold. 

		\item A continuous-time analogue of the problem is obtained by allowing  the player to stop at an epoch of a Poisson process with locally integrable rate function $p(t)$.
		
		\item In the finite-$n$ Bernoulli stopping problem of \cite{Ribas2019}, $f_k=w_kP(k,\infty)$ and \( w_k \) is the reward for stopping on a success at stage \( k \in \{ 1, 2, \ldots, n\} \). Proposition 6 in that paper shows that  $f-g$ changing sign at most once is necessary and sufficient for monotonicity, which is interpreted  as the stopping set consisting of a single interval. This does not extend to the infinite-horizon case, since  \( f < g \) implies that stopping is never optimal.
	\end{enumerate}
\end{rem}

\section{Applications}\label{sec5}
Consider the last-success problem, where the objective is to select any of the \( \{\ell\)th, \( (\ell+1)\)th, \(\dots\), \( m\)th\( \} \) last successes in \( n < \infty \) independent Bernoulli trials \( X_1, X_2, \dots, X_n \), with \( \ell \leq m \) \cite{MatsuiAno}. We extend this problem to the case of an infinite number of trials. The optimisation problem seeks the rule satisfying
\begin{alignat}{2}\label{def.optimal.rule}
	\tau^* &\coloneqq \argmax_{\tau \in {\cal T}}~ \prob\left[\ell \le \sum_{j \ge \tau} X_j \le m\right].
\end{alignat}
Without loss of generality, assume that $p_k \in [0,1)$  for all $k\ge1$, where $p_k$ is the success probability of the $k$th trial. If $\sum_{k} p_k = \infty$, then by the Borel-Cantelli lemma, $\sum_k X_k = \infty$ a.s. and no optimal rule exists. For the remainder of this section, we assume $\sum_k p_k < \infty$.

For fixed $n < \infty$, the number of successes among trials $k, k+1,\cdots, n$ is a random variable 
following a Poisson-binomial distribution with probability-generating function
$$z \mapsto \prod_{j=k}^n (1-p_j+zp_j).$$
Define the odds ratio $r_{k}\coloneqq {p_k}/{(1-p_k)}$.   A straightforward computation yields the probability of zero successes and the probability of $m$ successes from stage $k$ onward:
\begin{equation}
	\begin{aligned}
		s_0(k, n)&\coloneqq\prod_{j = k}^n (1-p_j),\nonumberj
		s_m(k, n)&\coloneqq s_0(k, n)e_m(k, n), \nonumberj
	\end{aligned}
\end{equation}
where $e_m(k, n)$ is an elementary symmetric polynomial
\begin{equation*}
	e_{m}(k, n)\coloneqq\sum_{k \le i_1 < \ldots <i_m \le n} r_{i_1}\cdots r_{i_m}, ~~~e_{0}(k, n)=1.
\end{equation*}
For a given $\ell \in [1,m]$, independence of the trials implies that the probability of observing $j \in [\ell, m]$  successes  from trial $k$ onward is
\begin{alignat}{2}\label{uml}
	u_m(k, n; \ell) \coloneqq \sum_{j=\ell}^m s_j(k, n).
\end{alignat} 
From (\ref{def.optimal.rule}), it is clear that maximising the probability of selecting a success amongst the  $\ell\text{th}, (\ell+1)\text{th}, \ldots, m\text{th}$ last successes is equivalent to maximising $u_m(\cdot, n,\, \ell)$. For an infinite number of trials,  we drop the second argument of $s_m$, $u_m$, $e_m$ and define $s_0(\infty)=0$.  Moreover, if $\sum_k p_k < \infty$ then $e_m(k)$ converges for all $k$ since  $e_m(1) \le  e^m_1(1) < \infty$.  

The next proposition is key to the solution of various last-success problems in  infinite-horizon.

\begin{assertion}\label{prop.unimodal.sm}
	Assume $\sum_k p_k < \infty$. 	For any $m < n  \in \mn$, the sequences $(s_m(1), s_m(2), \ldots)$ and $(u_m(1; \ell), u_m(2; \ell), \ldots)$ are unimodal. 
\end{assertion}
\begin{proof}
Fix $m < n$	and introduce the sequences $a=(a_1, a_2, \ldots)$, $b\coloneqq (b_1, b_2, \ldots)$ and , $c\coloneqq (c_1, c_2, \ldots)$, where  
	$ a_k \coloneqq 	s_{0}(k) $,  $ 	b_k = s_{m}(k)$ and $c_k = u_{m}(k; \ell)$ . These  sequences satisfy $b=P^ma$,  
	and  $c=P^\ell a + \cdots +P^m a $, where $P$ is the transition matrix  in (\ref{def.embedded.chain.poissonbinom}).  Since $a$   is nondecreasing, it is quasi-unimodal; hence  $b$ is  unimodal by application of Corollary \ref{cor.quasi_unimodality_nstopping}. Similarly, $c$  is  unimodal by application of Lemma \ref{lem.quasi_unimodality_sum_nstopping}.
\end{proof}
\begin{lemma}\label{lem.new}
Let $I \coloneqq \{m_1, m_2, \ldots, m_N\}$ be a arbitrary collection of positive integers, where $N \in \mn$.  Define 
	\begin{alignat}{2}\label{v_I}
		v(k; I) \coloneqq \sum_{j \in I} s_j(k),\quad k \in \mn.
	\end{alignat} 
If  $\sum_k p_k < \infty$, then $v$ is unimodal.
\end{lemma}
\begin{proof}

The Lemma is proved in much the same way as  Proposition \ref{prop.unimodal.sm}. Given an arbitrary set $I$ with $N \ge 1$ positive integers $m_1, \ldots m_N$, introduce the sequences $a=(a_1, a_2, \ldots)$ and $d = (d_1, d_2, \ldots)$, where $ a_k \coloneqq 	s_{0}(k) $, and $d_k = v(k; I)$. From the definition of $v(\cdot; I)$, we have $d=P^{m_1} a + \cdots +P^{m_N} a $.  Since $a$   is  quasi-unimodal, the assertion is proved  by application of Lemma \ref {lem.quasi_unimodality_sum_nstopping}.
\end{proof}
\subsection{Selecting the $m$th Last Success}
The optimisation task in this last-success problem is  defined in (\ref{def.optimal.rule}) with $\ell=m$. 
We generalise Theorem 2 in \cite{BrussPaindaveine}  to the infinite-horizon setting as follows.
\begin{thm}\label{thm:unimodality.lastm.infinite}
	Consider the {$m$th  last-success} problem in an infinite sequence of independent Bernoulli trials. Suppose  $\sum_k p_k < \infty$ and  that zero payoff is awarded for never stopping. Then an optimal rule exists,  and the optimal strategy is to stop at the first success, if any, occurring on or after index
	\begin{alignat*}{2} 
		k^* \coloneqq \min\{j \in \mn: e_m(j) \le  e_{m-1}(j)\}.
	\end{alignat*}
	The optimal value of the problem is  $s_m(k^*)$.		
\end{thm}
\begin{proof}
Define the sequences  $f=(f_k)_{k \in S}$ and $g \coloneqq (g_k)_{k \in S}$, where  
$$
f_k = \begin{dcases}
	s_{m-1}(k+1) & k \ge 1 \\
	0 & k=0 \text{ or } k = \infty
\end{dcases}, \text{ and }~
g_k = \begin{dcases}
	s_{m}(k+1) & k \ge 1 \\
	0 & k=0 \text{ or } k = \infty
\end{dcases}.
$$
These sequences are the payoffs for stopping at stage $k$ with  respectively $m-1$ successes and $m$ successes obtained from stage $k+1$ onwards.  They satisfy the relation $g=Pf$ where $P$ is the transition matrix defined in (\ref{def.embedded.chain.poissonbinom}).  By Proposition \ref{prop.unimodal.sm}, both $f$ and $g$ are unimodal and hence  the problem is monotone according to Theorem \ref{thm.unimodal.myopic}. By Corollary \ref{cor.threshold},  the optimal strategy is a threshold rule, with the threshold  occurring at the first index where $g_{k-1} - g_k$ changes sign from $-$ to $+$. Using (\ref{eqn.inequality.ola.payoff}), this occurs at $k^*= \min \{ j \in \mn: s_{m-1}(j) \ge s_{m}(j)\} = \min \{ j \in \mn: e_{m-1}(j) \ge e_{m}(j)\}$ and the optimal value is $g_{k^*}$.
	\end{proof}

\subsection{Selecting any success between $\ell$th last and $m$th last}
For the generalisation of the previous problem, the \( \ell \)th-to-\( m \)th last-success problem with a finite number of trials, the threshold rule is expressed in terms of a ratio of elementary symmetric polynomials (multiplicative odds). Its optimality is established by Theorem 2 in \cite{MatsuiAno}. We extend their result to the infinite case, following the same approach as before.

\begin{thm}\label{thm:existence.any_l_to_m}
	Consider the {$\ell$th-to-$m$th  last-success} problem in an infinite sequence of independent Bernoulli trials. Suppose $\sum_k p_k < \infty$ and that zero payoff is awarded for never stopping. Then, an optimal rule exists, and the  strategy is to stop at the first success, if any, occurring on or after index 
	\begin{alignat}{2} \label{threshold.mth}
		l^* \coloneqq \min\{j \in \mn: e_{m-1}(j) \le  e_{\ell}(j)\}.
	\end{alignat}
	The optimal value of the problem is \( u_m(l^*;\, \ell) \).
\end{thm}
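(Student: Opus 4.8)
The plan is to cast the $\ell$th-to-$m$th last-success problem as a Bernoulli stopping problem and then run verbatim the argument used for Theorem~\ref{thm:unimodality.lastm}. Stopping on a success at trial $k$ selects one of the $\ell$th,\ldots,$m$th last successes exactly when the number of successes among $k+1,k+2,\dots$ falls in $\{\ell-1,\dots,m-1\}$, so the stopping payoff is $f_k = u_{m-1}(k+1;\ell-1)=\sum_{i=\ell-1}^{m-1} s_i(k+1)$ (with $f_0=f_\infty=0$) and the continuation payoff is $g_k = u_m(k+1;\ell)=\sum_{i=\ell}^{m} s_i(k+1)$. First I would confirm that these two sequences are linked by $g=Pf$ and obey the recursion (\ref{recursion.g.f}); this reduces to summing the Poisson--binomial identity $s_j(k)=(1-p_k)s_j(k+1)+p_k s_{j-1}(k+1)$ over $j\in\{\ell,\dots,m\}$. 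Existence is then immediate: $f$ is a bounded nonnegative sequence of probabilities and $\sum_k p_k<\infty$, so condition (ii) of Theorem~\ref{def.existence} applies.

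The core fact is that the continuation payoff $g$ is unimodal, and I would not reprove this: the sequence $g_k=u_m(k+1;\ell)$ is precisely the sequence $h=g^{(\ell)}+\cdots+g^{(m)}$ whose unimodality is established in Proposition~\ref{prop.unimodal.sm} for every $\ell\in[1,m]$. With the continuation payoff unimodal, Theorem~\ref{thm.unimodal.myopic} delivers optimality of the myopic rule, and Corollary~\ref{cor.threshold} promotes it to a threshold rule whose threshold is the mode index of $g$ and whose value is the mode of $g$, namely $g_{l^*-1}=u_m(l^*;\ell)$.

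It then remains to locate the mode. By (\ref{eqn.inequality.ola.payoff}) the mode of $g$ sits at the first index $k$ where $g_{k-1}-g_k$ turns from negative to positive, equivalently the first $k$ with $f_k\ge g_k$. The key computation is the telescoping cancellation
$$
f_k-g_k=\sum_{i=\ell-1}^{m-1} s_i(k+1)-\sum_{i=\ell}^{m} s_i(k+1)=s_{\ell-1}(k+1)-s_m(k+1)=s_0(k+1)\bigl(e_{\ell-1}(k+1)-e_m(k+1)\bigr),
$$
in which the overlapping terms $s_\ell,\dots,s_{m-1}$ cancel. Since $s_0(k+1)>0$, the sign of $f_k-g_k$ matches that of $e_{\ell-1}(k+1)-e_m(k+1)$, so the threshold is $l^*=\min\{j\in\mn: e_m(j)\le e_{\ell-1}(j)\}$ with optimal value $u_m(l^*;\ell)$, recovering the special case $\ell=m$ of Theorem~\ref{thm:unimodality.lastm}.

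The step I expect to demand the most care is not any single estimate but the bookkeeping: verifying that the stopping and continuation payoffs are correctly identified, so that the two overlapping partial sums telescope to $s_{\ell-1}-s_m$ rather than to some other combination, and checking that the $g$ arising here coincides exactly with the sequence whose unimodality Proposition~\ref{prop.unimodal.sm} supplies. Once those identifications are pinned down, the monotone/threshold machinery of Section~\ref{sec4} does the rest, and no analytic input beyond the unimodality-preservation results of Section~\ref{sec2} is required.
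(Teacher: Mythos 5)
Your proposal is essentially the paper's own proof: the same identification of the payoffs $f_k=\sum_{i=\ell-1}^{m-1}s_i(k+1)$ and $g_k=\sum_{i=\ell}^{m}s_i(k+1)$, the same appeals to Proposition \ref{prop.unimodal.sm}, Theorem \ref{thm.unimodal.myopic} and Corollary \ref{cor.threshold}, and the same telescoping of $f-g$ down to $s_{\ell-1}-s_m$. If anything you are slightly more careful than the paper: you verify $g=Pf$ via the Poisson--binomial recursion and cite Theorem \ref{def.existence}(ii) for existence, and your explicit cancellation gives the correct comparison $e_{\ell-1}\ \text{vs.}\ e_m$, whereas the paper's proof contains a typo locating the mode via $\min\{j: e_{m-1}(j)\ge e_m(j)\}$ (carried over from Theorem \ref{thm:unimodality.lastm}) instead of $\min\{j: e_{\ell-1}(j)\ge e_m(j)\}$.
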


\begin{proof}
	The argument parallels that of Theorem \ref{thm:unimodality.lastm.infinite}. 	Define the sequences  $f=(f_k)_{ k \in S}$ and $g\coloneqq Pf=(g_k)_{ k \in S}$, where 
	$$
	f_k = \begin{dcases}
		u_{m-1}(k+1; \ell-1) & k \ge 1 \\
		0 & k=0 \text{ or } k = \infty
	\end{dcases}, \text{ and }~
	g_k = \begin{dcases}
		u_{m}(k+1; \ell) & k \ge 1 \\
		0 & k=0 \text{ or } k = \infty
	\end{dcases}.
	$$
	By Proposition \ref{prop.unimodal.sm}, the sequence $g$ is unimodal and hence  the problem is monotone according to Theorem \ref{thm.unimodal.myopic}. By Corollary \ref{cor.threshold},  the optimal strategy is a threshold rule, with the threshold  occurring at the first index where  $g_{k-1} - g_k$ changes sign from $-$ to $+$. Using (\ref{eqn.inequality.ola.payoff}),  this occurs at  $l^*= \min \{ j \in \mn: u_{m-1}(j; \ell-1) \ge u_{m}(j; \ell)\} = \min \{ j \in \mn: s_{m-1}(j) \ge s_{\ell}(j)\} = \min \{ j \in \mn: e_{m-1}(j) \ge e_{\ell}(j)\}$ and the optimal value is $g_{l^*}$.
\end{proof}

\begin{rem}
From Lemma \ref{lem.new}, it is immediate that the Bernoulli stopping problem in which stopping is allowed on an arbitrary collection of success indices is also monotone and the myopic strategy is optimal.
\end{rem}

\section*{Acknowledgement}
I wish to express my gratitude to Sasha Gnedin, who suggested the Bernoulli stopping problem and provided several valuable suggestions. 
\printbibliography

\end{document}